\numberwithin{equation}{section}
\tikzstyle{every loop}= []
\colorlet{myGray}{gray!25}
\tikzset{my circle/.style={circle,draw=black,fill=myGray,inner
    sep=0pt,minimum size=6pt}} \tikzset{my square/.style={regular
    polygon,regular polygon sides=4,draw=black,fill=myGray,inner
    sep=0pt,minimum size=9pt}} \tikzset{my star/.style={star,star
    point ratio=2.5,draw=black,fill=myGray,inner sep=0pt,minimum
    size=9pt}} \tikzset{my triangle/.style={regular polygon,regular
    polygon sides=3,draw=black,fill=myGray,inner sep=0pt,minimum
    size=9pt}} \tikzset{my
  kite/.style={diamond,aspect=0.25,draw=black,fill=myGray,inner
    sep=1pt,minimum size=6pt}}
\tikzset{every pin/.style={pin distance=3pt,inner sep=1pt,font=\tiny}}
\tikzset{every pin edge/.style={semithick}}
\newtheorem{theorem}{Theorem}[section]
\newtheorem{lemma}[theorem]{Lemma}
\theoremstyle{definition}
\newtheorem{definition}[theorem]{Definition}
\newtheorem{remark}[theorem]{Remark}
\newtheorem{example}[theorem]{Example}
\numberwithin{equation}{section}
\newcommand{\PP}{\mathbb{P}}
\begin{document}

%%%%%%%%%%%%%%%%%%%%%%%%%%%%%%%%%%%%%%%%%%%%%%%%%%%%%%%%%%%%%%%%%%%%%%%%

\title{Hadamard Star Configurations}
\thanks{Last updated: Jan. 14, 2018}

\author[E. Carlini]{Enrico Carlini}
\address[E. Carlini]{DISMA-Department of Mathematical Sciences \\
Politecnico di Torino, Turin, Italy}
\email{enrico.carlini@polito.it}

\author[M. V. Catalisano]{Maria Virginia Catalisano}
\address[M. V. Catalisano]{Dipartimento di Ingegneria Meccanica, Energetica, Gestionale e dei
Trasporti, Universit\`a degli studi di Genova, Genoa, Italy}
\email{catalisano@dime.unige.it}

\author[E. Guardo]{Elena Guardo}
\address[E. Guardo]{Dipartimento di Matematica e Informatica\\
Universit\`a degli studi di Catania\\
Viale A. Doria, 6 \\
95100 - Catania, Italy}
\email{guardo@dmi.unict.it}

\author[A. Van Tuyl]{Adam Van Tuyl}
\address[A. Van Tuyl]{Department of Mathematics and Statistics\\
McMaster University, Hamilton, ON, L8S 4L8}
\email{vantuyl@math.mcmaster.ca}

\keywords{Hadamard products, star configurations}
\subjclass[2010]{14T05 , 14M99}

\begin{abstract}
Bocci, Carlini, and Kileel have shown that the square-free
Hadamard product of a finite set of points $Z$ that
all lie on a line $\ell$ in $\mathbb{P}^n$ produces
a star configuration of codimension $n$.
In this paper
we introduce a construction using the Hadamard product to construct
star configurations of codimension $c$.  In the
case that $c = n= 2$, our construction produces the
star configurations of Bocci, Carlini, and Kileel.
We will call any star configuration
that can be constructed using our approach a Hadamard star
configuration.
Our main result is a classification of
Hadamard star configurations.
\end{abstract}

\maketitle

%%%%%%%%%%%%%%%%%%%%%%%%%%%%%%%%%%%%%%%%%%%%%%%%%%%%%%%%%%%%%%%%%%%%%%%%%%%%%%%

\section{Introduction}\label{sec:intro}

During the past decade, star configurations in $\mathbb{P}^n$ have been
identified as an interesting family of projective varieties.
Roughly speaking (formal definitions are postponed until the next section),
a star configuration is a union of linear
spaces that have been intersected in a prescribed manner.   Under
the appropriate conditions, a star configuration can be a set of
points in $\mathbb{P}^n$.   These
varieties and their corresponding ideals have been found to have
interesting extremal properties (e.g., in relation to the
containment problem for symbolic powers of ideals \cite{BH}).  At the same
time, there are nice descriptions of the minimal generators and
minimal graded free resolutions of these ideals (see \cite{GHM}).
Other papers that have studied star configurations include
\cite{CGVT,GHMN,PS:1}.

Quite recently (and perhaps, surprisingly) it has been
shown that star configurations of points also arise when one studies the
Hadamard product of projective varieties.  Given two varieties
$X$ and $Y$ in $\mathbb{P}^n$, the Hadamard product of $X$ and
$Y$, denoted $X \star Y$, is the closure of the rational map
from $X \times Y$ to $\mathbb{P}^n$ given by
\[([a_0:\cdots:a_n], [b_0:\cdots:b_n]) \rightarrow [a_0b_0:\cdots:a_nb_n].\]
Note that the name is inspired by the Hadamard product of two matrices
in linear algebra where one multiplies two matrices of the same size
entry-wise.   The notion of a Hadamard product of varieties first
appeared in \cite{CMS,CTY} in the study of the geometry
of Boltzmann machines.  The Hadamard product is also
a useful tool in tropical geometry (see \cite[Theorem 5.5.11]{MS}).
 More recently Bocci, Kileel, and the first
author \cite{BCK} developed some of the properties of Hadamard
products of linear spaces.  Additional results on Hadamard
products can be found in \cite{BCFL,BCFL2,FOW}.
 It was in the paper
of Bocci, Carlini, and Kileel (see \cite[Theorem 4.7]{BCK})
that a connection to star configurations was made.  In particular,
if $\ell$ is a line in $\mathbb{P}^n$ and $Z$ is a set of
points on $\ell$, then under some suitable hypotheses, the Hadamard
product of $Z$ with itself at most $ \min\{n,|Z|\}$ times is
a star configuration of points.

It is natural to ask if the Hadamard product can be used
to construct star configurations of various codimensions, not
just codimension $n$.   We introduce such a construction
in this paper.
A star configuration that can be constructed via our approach will be
called a {\it Hadamard star configuration}.  When $n=2$,
we show that our Hadamard star configurations are the
star configurations produced by Bocci, et al.'s procedure.

It can be shown that there are star configurations that cannot be
Hadamard star configurations (see Example \ref{nonexample}).  It thus
behooves us to ask if one can determine if a given star configuration
is a Hadamard star configuration.  One of the main results
of this paper is a classification of Hadamard star
configurations (see Theorem \ref {thmhadamardset} and
Remark \ref {remhadamardset}).

Our paper is structured as follows.
In the next section
we present the definitions and results mentioned in the introduction.
In particular, we introduce Hadamard star configurations.
In section three we present  our
classification result.
In the final section, we examine the problem of constructing
Hadamard star configurations.

\noindent
{\bf Acknowledgments.}
The first and second authors
{ thank McMaster University for their support while
visiting the fourth  author}.
The  first and second authors were also supported by GNSAGA of INDAM funds.
The fourth author acknowledges the financial support provided by NSERC.

%%%%%%%%%%%%%%%%%%%%%%%%%%%%%%%%%%%%%%%%%%%%%%%%%%%%%%%%%%%%%%%%%%%%%%%%%

\section{Background results}\label{sec:background}

In this section, we introduce the relevant background.
Throughout this paper $S = \mathbb{C}[x_0,\ldots,x_n]$.  We will
denote by $S_i$ the homogeneous degree $i$ piece of $S$. Given $I\subset S$,
a homogeneous ideal, we let
$V(I)\subset\PP^n$ denote the variety defined by the vanishing locus
of all elements of $I$.

We begin by defining a star configuration.

\begin{definition}
A set $\mathcal{L}=\{L_1,\ldots,L_r\}\subset S_1$ is {\it generally linear}
if $r\geq n+1$ and
{ if any  choice of  $n+1$ distinct elements in $\mathcal{L}$
are  linearly independent.}

\end{definition}

\begin{definition}
Let $\mathcal{L}=\{L_1,\ldots,L_r\}\subset S_1$ and let $c$
be an integer $1\leq c\leq n$. We introduce the variety
\[\mathbb{X}_c(\mathcal{L})=\bigcup_{1\leq i_1<\cdots<i_c\leq r}
V(L_{i_1},\ldots,L_{i_c})\subset\PP^n.\]
If $\mathcal{L}$ is generally linear we say that
$\mathbb{X}_c(\mathcal{L})$ is a {\it codimension $c$ star configuration},
or simply, a {\it star configuration}.
\end{definition}

\begin{remark}
Note that the star configuration $\mathbb{X}_c(\mathcal{L})$
is completely determined by $c$ and by the points
$[L_1],\ldots,[L_r]\in\PP S_1$.   Alternatively,
given $\mathcal{L}=\{L_1,\ldots,L_r\}$, we can consider the hyperplanes
$H_i=V(L_i)$.  These
hyperplanes uniquely determine $\mathbb{X}_c(\mathcal{L})$,
thus we will sometimes write $\mathbb{X}_c(H_1,\ldots,H_r)$.
\end{remark}

\begin{example}
Let $\mathcal{L} = \{L_1,\ldots,L_5\} \subset S_1$
where $S = \mathbb{C}[x_0,x_1,x_2]$ be a generally linear set.
If $c = 2$, then the star configuration $\mathbb{X}_2(\mathcal{L}) \subset{\Bbb P}^2$
is the 10 points of intersection of the five lines, as shown in
Figure \ref{starconfigpicture}.   If $c = 1$, then
the star configuration $\mathbb{X}_1(\mathcal{L})$ is the
union of the five lines in Figure \ref{starconfigpicture}.  Notice that
the lines resemble a star in this case.
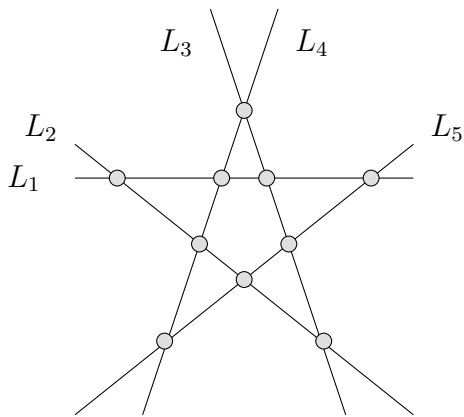
\begin{figure}[!h]
  \centering
    \begin{tikzpicture}[scale=0.45]
      \draw (-5,3)--(5,3);
      \draw (1,8)--(-3,-4);
      \draw (-1,8)--(3,-4);
      \draw (5,4)--(-5,-4);
      \draw (-5,4)--(5,-4);
      \node at (-6.5,3) {$L_1$};
      \node at (-6,4.5) {$L_2$};
      \node at (-2,7) {$L_3$};
      \node at (2,7)  {$L_4$};
      \node at (6,4.5) {$L_5$};
      \node[my circle] at (0,5) {};
      \node[my circle] at (-.666,3) {};
      \node[my circle] at (.666,3) {};
      \node[my circle] at (3.75,3) {};
      \node[my circle] at (-3.75,3) {};
      \node[my circle] at (0,0) {};
      \node[my circle] at (-2.35,-1.82) {};
       \node[my circle] at (2.35,-1.82) {};
      \node[my circle] at (-1.32,1.05) {};
      \node[my circle] at (1.32,1.05) {};
    \end{tikzpicture}
    \caption{A star configuration of 10 points in $\mathbb{P}^2$}
    \label{starconfigpicture}
\end{figure}
\end{example}

We now turn our attention to the required background on Hadamard products.

\begin{definition}\label{defn:hadamardpoints}
Let $P = [a_0:\cdots:a_n]$ and $Q = [b_0:\cdots:b_n]$ be two
points in $\mathbb{P}^n$.   If there exists some $i$ such that
$a_ib_i \neq 0$, then the {\it Hadamard product} of $P$ and $Q$,
denoted $P \star Q$, is given by
\[P \star Q = [a_0b_0: \cdots : a_nb_n].\]
Otherwise, we say $P \star Q$ is not defined.
\end{definition}

\begin{definition}
The {\it Hadamard product of two varieties} $X,Y \subset \mathbb{P}^n$ is
given by
\[X \star Y = \overline{\{ P \star Q ~|~ P \in X, Q \in Y, ~~\mbox{and
$P \star Q$ is defined}\}},\]
where we mean the closure in $\mathbb{P}^n$ with respect to the
Zariski topology.  The $r$-th {\it Hadamard power} of $X$ is $X^{\star r}
= X \star X^{\star (r-1)}$ where $X^0 = [1:\cdots:1]$.
\end{definition}

\begin{remark}  The Hadamard product can be defined in terms  of
the Segre product and a specific projection map, as in
\cite[Definition 2.2]{BCK}.  One
can show that our definition is equivalent to the definition
given above (see \cite[Remarks 2.4 and 2.5]{BCK}).  The defining
ideal of $I(X \star Y)$ can be computed using elimination;
see \cite[Remark 2.6]{BCK} for an algorithm.
\end{remark}

When using the Hadamard product, we sometimes need to ensure
we have enough non-zero coordinates in our points.  The following
notation shall prove useful.

\begin{definition}
Let  $\Delta_i$ be the set of points of $\mathbb{P}^n$
which have at most $i+1$ non-zero coordinates, that is,  $\Delta_i$ is the set of points
which have at least  $n-i$ zero coordinates. In particular, $\Delta_n =
\mathbb{P}^n$.

%For $j=0,\ldots,n$, let $H_i$ denote the hyperplane
%$in $\mathbb{P}^n$ given by $H_j = V(x_j)$.
%We set
%\[\Delta_i = \bigcup_{0 \leq j_1 < \cdots < j_{n-i} \leq n} H_{j_1} \cap \cdots \cap
%H_{j_{n-i}}.\]
\end{definition}

We now introduce a special set of linear forms.

\begin{definition}
A set of linear forms $\mathcal{L}=\{L_1,\ldots,L_r\}\subset S_1$ is
a {\it Hadamard set} if there exist  an $L=a_0x_0+\cdots+a_nx_n\in S_1$ and
points $P_1,\ldots,P_r\in\PP^n$ with $P_i=[p_0(i):\cdots:p_n(i)]
\in \mathbb{P}^n \setminus \Delta_{n-1}$,   that is, $p_j(i) \neq 0$ for any $i,j$, such that
\[[L_i]=
\left[{a_0 \over p_0(i)}x_0+\cdots+{a_n\over p_n(i)}x_n\right]\in\PP S_1.\]
We say that $\mathcal{L}$ is a {\it strong Hadamard set} if, in addition,
\[\sum_{j=0}^{n} a_jp_j(i)=0~~\mbox{for $i=1,\ldots,r$}.\]
\end{definition}

\begin{remark} \label{support}  Recall that the support of a linear form $L$ is the set of variables appearing in $L$ with a non-zero coefficient. The set  $\mathcal{L}=\{L_1,\ldots,L_r\}$ is a
Hadamard set if and only if all the linear forms $L_i$ have
the same support. To see why, just define $L$ with the same support as the elements of $\mathcal{L}$
and all coefficients equal one;
the $P_i$ are then defined by taking the inverse of the
coefficients of the linear forms $L_i$. However a necessary and sufficient condition to be a strong Hadamard set is more complicated, as shown in Theorem \ref{thmhadamardset}.
\end{remark}

We are now able to define our main object of interest.

\begin{definition}
A star configuration $\mathbb{X}_c(\mathcal{L})$ is a {\it Hadamard star configuration} if $\mathcal{L}$ is a strong Hadamard set.
\end{definition}

\begin{example}\label{nonexample}
Not every star configuration is a Hadamard star configuration.  For example,
let $S = \mathbb{C}[x_0,x_1,x_2]$ and consider $\mathcal{L} = \{x_0,x_1,x_2\}$.
It is easy to see that the star configuration
\[\mathbb{X}_2(\mathcal{L}) = \{[1:0:0],[0:1:0],[0:0:1]\},\]
which consists of the three coordinate
points of $\mathbb{P}^2$, is not a Hadamard star configuration.
Indeed, by Remark \ref{support}, $\mathcal{L}$ is not even a Hadamard  set.
\end{example}

We use the name Hadamard because of the connection with
the Hadamard product, as shown in the next two lemmas.

\begin{lemma}\label{hadamardpoint}
Let $L = a_0x_1 + \cdots + a_nx_n \in S_1$ and $P = [p_0:\cdots:p_n]
\in \mathbb{P}^n \setminus \Delta_{n-1}$.  Then
\[P \star V(L) = V(L') ~~\mbox{where $L' = \frac{a_0}{p_0}x_0 + \cdots
+ \frac{a_n}{p_n}x_n$}.\]
\end{lemma}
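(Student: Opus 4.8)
The plan is to compute the Hadamard product $P \star V(L)$ directly from the definition and recognize the result as a hyperplane. Recall that $V(L) = \{[b_0:\cdots:b_n] \in \PP^n \mid a_0b_0 + \cdots + a_nb_n = 0\}$, and that since $P = [p_0:\cdots:p_n]$ has all coordinates nonzero (as $P \in \PP^n \setminus \Delta_{n-1}$), the Hadamard product $P \star Q$ is defined for every point $Q$ with at least one nonzero coordinate, in particular for every $Q \in V(L)$. Thus the set $\{P \star Q \mid Q \in V(L)\}$ needs no closure to be taken at the level of points where the map is defined; the only subtlety is the Zariski closure in the definition of $X \star Y$.

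First I would set up the coordinate change. Given $Q = [b_0:\cdots:b_n] \in V(L)$, the point $P \star Q = [p_0b_0:\cdots:p_nb_n]$. Writing $c_j = p_jb_j$, the relation $\sum_j a_jb_j = 0$ becomes $\sum_j \frac{a_j}{p_j}c_j = 0$, i.e. $P\star Q \in V(L')$ with $L' = \frac{a_0}{p_0}x_0 + \cdots + \frac{a_n}{p_n}x_n$ (note $L'$ is well-defined precisely because every $p_j \neq 0$). This shows $\{P \star Q \mid Q \in V(L), \ P\star Q \text{ defined}\} \subseteq V(L')$, and hence $P \star V(L) \subseteq V(L')$ after taking closure, since $V(L')$ is already closed.

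For the reverse inclusion, I would argue that the map $Q \mapsto P \star Q$ is invertible on the relevant locus: the coordinatewise multiplication by $P^{-1} := [\frac{1}{p_0}:\cdots:\frac{1}{p_n}]$ (again well-defined since all $p_j \neq 0$) sends any $R = [c_0:\cdots:c_n] \in V(L')$ back to a point $Q = P^{-1} \star R$ with $\sum_j a_j \frac{c_j}{p_j} = \sum_j \frac{a_j}{p_j}c_j = 0$, so $Q \in V(L)$, and clearly $P \star Q = R$. This exhibits every point of $V(L')$ as an honest Hadamard product $P \star Q$ with $Q \in V(L)$, giving $V(L') \subseteq \{P \star Q \mid Q \in V(L)\} \subseteq P \star V(L)$. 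Combining the two inclusions yields $P \star V(L) = V(L')$.

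I do not anticipate a serious obstacle here; the statement is essentially a bookkeeping exercise in coordinatewise scaling. The one point that deserves a careful sentence is why $P \star Q$ is defined for \emph{every} $Q \in V(L)$ (so that no points are lost and no extra closure is needed): this is exactly where the hypothesis $P \notin \Delta_{n-1}$ is used, since all $p_j \neq 0$ forces $p_jb_j \neq 0$ for whichever index has $b_j \neq 0$. A minor remark: the statement as written has a typographical slip ($L = a_0x_1 + \cdots + a_nx_n$ should read $L = a_0x_0 + \cdots + a_nx_n$), and the proof should silently use the intended form $L = \sum_{j=0}^n a_jx_j$.
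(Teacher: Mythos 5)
Your proof is correct and follows essentially the same route as the paper's: verify that $L'$ vanishes on $P\star Q$ for $Q\in V(L)$ by direct substitution, and for the reverse inclusion divide the coordinates of a point of $V(L')$ by the $p_j$ to exhibit it as $P\star R'$ with $R'\in V(L)$. Your added remarks on the Zariski closure and the typographical slip in $L$ are fine but not needed beyond what the paper already does.
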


\begin{proof}
For any $Q = [q_0:\cdots:q_n]
\in V(L)$, the linear form
$L'$  vanishes on the point $P \star Q = [p_0q_0:\cdots:p_nq_n]$;  in fact
\[ \frac{a_0}{p_0}(p_0q_0) + \cdots
+ \frac{a_n}{p_n}(p_nq_n) = a_0q_0+\cdots+a_nq_n = 0.\]

Conversely, for any $R = [r_0:\cdots:r_n] \in V(L')$, the equation
\[\frac{a_0}{p_0}(r_0) + \cdots
+ \frac{a_n}{p_n}(r_n) = 0\]
implies $R' = [\frac{r_0}{p_0}:\cdots:\frac{r_n}{p_n}] \in V(L)$.
Since $R = P \star R'$, we get
$R  \in P \star V(L)$.
\end{proof}

\begin{lemma} \label{charlemma}
Let $\mathcal{L} = \{L_1,\ldots,L_r\} \subset S_1$.
\begin{enumerate}
\item[$(i)$] $\mathcal{L}$ is a Hadamard set for $L$ and
$P_1,\ldots,P_r$ if and only if $V(L_i) = P_i \star V(L)$ for all $1\leq i \leq r$.
\item[$(ii)$] $\mathcal{L}$ is a strong Hadamard set for $L$ and
$P_1,\ldots,P_r$ if and only if $V(L_i) = P_i \star V(L)$
and $P_i \in V(L)$ for all $1\leq i \leq r$.
\end{enumerate}
\end{lemma}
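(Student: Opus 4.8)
The plan is to reduce both statements to Lemma \ref{hadamardpoint}, using the elementary observation that two nonzero linear forms $F,G\in S_1$ define the same hyperplane $V(F)=V(G)\subset\PP^n$ if and only if $[F]=[G]$ in $\PP S_1$.

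First I would treat $(i)$. Fix the data: a linear form $L=a_0x_0+\cdots+a_nx_n\in S_1$ and points $P_i=[p_0(i):\cdots:p_n(i)]\in\PP^n\setminus\Delta_{n-1}$, so that every $p_j(i)\neq 0$, and introduce $L_i'=\frac{a_0}{p_0(i)}x_0+\cdots+\frac{a_n}{p_n(i)}x_n$, which is well defined for precisely this reason. Unwinding the definition, $\mathcal{L}$ is a Hadamard set for $L$ and $P_1,\ldots,P_r$ exactly when $[L_i]=[L_i']$ in $\PP S_1$ for every $i$, hence (by the observation above) exactly when $V(L_i)=V(L_i')$ for every $i$. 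Lemma \ref{hadamardpoint} identifies $V(L_i')$ with $P_i\star V(L)$, and the desired equivalence in $(i)$ drops out by chaining these equalities.

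Next I would do $(ii)$. The key remark is that the extra condition defining a strong Hadamard set, namely $\sum_{j=0}^n a_jp_j(i)=0$, is precisely the statement that $L$ vanishes at the point $P_i$, i.e. $P_i\in V(L)$. Thus $\mathcal{L}$ being a strong Hadamard set for $L$ and $P_1,\ldots,P_r$ is the same as it being a Hadamard set for this data together with the additional property $P_i\in V(L)$ for all $i$, and $(ii)$ then follows immediately by invoking $(i)$.

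I do not anticipate a real obstacle here: essentially all the substance sits in Lemma \ref{hadamardpoint}. The only points to handle with a little care are that the phrase ``Hadamard set for $L$ and $P_1,\ldots,P_r$'' already carries the standing hypothesis $P_i\in\PP^n\setminus\Delta_{n-1}$ (so that $L_i'$ makes sense and Lemma \ref{hadamardpoint} is applicable on both sides of the equivalence), and the passage back and forth between an equality of hyperplanes and proportionality of their defining linear forms; neither requires more than a sentence.
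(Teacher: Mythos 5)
Your proposal is correct and follows essentially the same route as the paper: both parts are reduced to Lemma \ref{hadamardpoint}, with the strong condition $\sum_j a_jp_j(i)=0$ identified as $P_i\in V(L)$. If anything, you are slightly more explicit than the paper (which simply says ``reversing this argument gives the reverse implication'') in noting that the converse uses the fact that equal hyperplanes have proportional defining linear forms, so that $V(L_i)=P_i\star V(L)$ forces $[L_i]=[L_i']$ in $\PP S_1$.
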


\begin{proof} We prove only $(ii)$ since $(i)$ can also be obtained from
this proof.  Let $\mathcal{L} = \{L_1,\ldots,L_r\}$ be a strong Hadamard set
for $L \in S_1$ and $P_1,\ldots,P_r \in \mathbb{P}^n \setminus \Delta_{n-1}$.
If $L = a_0x_0+\cdots+a_nx_n$ and $P_i = [p_0(i):\cdots:p_n(i)]$,
by definition of  a Hadamard set,
 $L_i = \frac{a_0}{p_0(i)}x_0+\cdots+\frac{a_n}{p_n(i)}x_n$ for each $i$.
Lemma \ref{hadamardpoint} then implies $V(L_i) = P_i \star V(L)$.  The
condition $\sum_{j=0}^{n} a_jp_j(i)=0$ is simply the condition that
each $P_i \in V(L)$.  Reversing this argument gives the reverse implication.
\end{proof}

%\begin{remark}
%Note that, $\mathcal{L}=\{L_1,\ldots,L_r\}$ is an Hadamard set for $L$ and $P_1,\ldots,P_r$ if and only if the hyperplanes $H_i=V(L_i)$ and $H=V(L)$ are such that $H_i=P_i\star H$ for all $i$. In particular, $\mathcal{L}$ is a strong Hadamard set if, in addition, $P_1,\ldots,P_r\in H$.
%\end{remark}

We round out this section by explaining the connection to
\cite{BCK}.   We first need the following variant of the Hadamard product.

\begin{definition}\label{sqfreehadamard}
If $\mathbb{X}$ is a finite set of points in $\mathbb{P}^n$, then
the $r$-th {\it square-free Hadamard product} of $\mathbb{X}$
is
\[\mathbb{X}^{\underline{\star} r} = \{P_1 \star \cdots \star P_r ~|~
P_i \in \mathbb{X} ~~\mbox{and}~~ P_i \neq P_j \}.\]
\end{definition}

Bocci, Carlini, and Kileel then proved the following result
(we have specialized their result).

\begin{theorem}[{\cite[Theorem 4.7]{BCK}}]\label{starconfighadamard}
Let $\ell$ be a line in $\mathbb{P}^n$ such that $\ell \cap \Delta_{n-2}
= \emptyset$, and let $\mathbb{X} \subset \ell$ be a set of $m > n$
points with $\mathbb{X} \cap \Delta_{n-1} = \emptyset$.
Then $\mathbb{X}^{\underline{\star} n}$ is
a star configuration of $\binom{m}{n}$ points of $\mathbb{P}^n$.
\end{theorem}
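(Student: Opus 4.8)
The plan is to translate the statement into polynomial interpolation.

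\textbf{Step 1 (normal form for $\ell$).} Since a line in $\mathbb{P}^n$ meets every coordinate hyperplane $V(x_j)$, each coordinate function, restricted to $\ell$, has a unique zero on $\ell$; call it $r_j$, a point of the parameter line $\mathbb{P}^1$ attached to any parametrization of $\ell$. The hypothesis $\ell\cap\Delta_{n-2}=\emptyset$ says precisely that no point of $\ell$ has two vanishing coordinates, i.e.\ that $r_0,\ldots,r_n$ are pairwise distinct. After a M\"obius change of parameter (to make all $r_j$ finite) and a diagonal change of coordinates on $\mathbb{P}^n$ we may therefore assume
\[\ell=\{\,[t-r_0:t-r_1:\cdots:t-r_n]\mid t\in\mathbb{P}^1\,\},\qquad r_0,\ldots,r_n\ \text{distinct.}\]
One checks that such a diagonal change of coordinates is an automorphism of $\mathbb{P}^n$ preserving each $\Delta_i$, that it alters $\mathbb{X}^{\underline\star n}$ only by post-composition with a further fixed diagonal automorphism, and that being a star configuration and having $\binom{m}{n}$ points are automorphism-invariant; so this reduction costs nothing. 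Write $\mathbb{X}=\{P_1,\ldots,P_m\}$ with $P_k=[t_k-r_0:\cdots:t_k-r_n]$; the $t_k$ are distinct, and $\mathbb{X}\cap\Delta_{n-1}=\emptyset$ forces $t_k\neq r_j$ for all $j,k$.

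\textbf{Step 2 (polynomial reformulation).} Let $V=\mathbb{C}[x]_{\leq n}$ be the $(n+1)$-dimensional space of polynomials of degree at most $n$. Because $r_0,\ldots,r_n$ are distinct, Lagrange interpolation shows that $g\mapsto[g(r_0):\cdots:g(r_n)]$ is an isomorphism $\mathbb{P}(V)\xrightarrow{\sim}\mathbb{P}^n$. For an $n$-subset $I=\{i_1<\cdots<i_n\}\subseteq\{1,\ldots,m\}$ put $f_I(x)=\prod_{k\in I}(x-t_k)$, a monic polynomial of degree $n$. The $j$-th coordinate of $P_{i_1}\star\cdots\star P_{i_n}$ is $\prod_{k\in I}(t_k-r_j)=(-1)^n f_I(r_j)$; since the factor $(-1)^n$ is independent of $j$, the point $P_{i_1}\star\cdots\star P_{i_n}$ is the image of $[f_I]\in\mathbb{P}(V)$ under the above isomorphism. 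Hence, under this identification,
\[\mathbb{X}^{\underline\star n}=\{\,[f_I]\mid I\subseteq\{1,\ldots,m\},\ |I|=n\,\}\subseteq\mathbb{P}(V).\]

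\textbf{Step 3 (the linear forms).} For $k=1,\ldots,m$ let $M_k$ be the evaluation functional $g\mapsto g(t_k)$ on $V$, regarded via Step 2 as a linear form in $x_0,\ldots,x_n$. If $|I|=n$, then $[f_I]\in V(M_k)$ iff $f_I(t_k)=0$ iff $t_k$ is one of the $n$ distinct roots of $f_I$ iff $k\in I$; and $\bigcap_{k\in I}V(M_k)$ is the projectivization of $\{g\in V\mid g(t_k)=0\ \text{for all}\ k\in I\}=\mathbb{C}\cdot f_I$, which is the single point $[f_I]$. Therefore
\[\mathbb{X}_n(M_1,\ldots,M_m)=\bigcup_{|I|=n}\ \bigcap_{k\in I}V(M_k)=\{\,[f_I]\mid|I|=n\,\}=\mathbb{X}^{\underline\star n}.\]
It remains to see that $\{M_1,\ldots,M_m\}$ is generally linear: $m\geq n+1$ since $m>n$, and any $n+1$ of the $M_k$ are evaluations at $n+1$ distinct points, hence a basis of $V^*$ (Vandermonde) and in particular linearly independent. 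Thus $\mathbb{X}^{\underline\star n}$ is a star configuration; and since the root set of $f_I$ recovers $I$, the assignment $I\mapsto[f_I]$ is injective, so this configuration has exactly $\binom{m}{n}$ points. The only genuinely delicate part is Step 1 --- extracting the normal form of $\ell$ from the $\Delta_{n-2}$-hypothesis and verifying that the reduction to it is harmless; once that is done, Steps 2 and 3 are routine linear algebra on $V$.
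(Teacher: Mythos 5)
Your proof is correct, but note that the paper itself offers no proof of this statement: it is imported verbatim from \cite[Theorem 4.7]{BCK}, and the only trace of that argument in the paper appears in the proof of the theorem that follows, where the hyperplanes of the configuration are produced Hadamard-theoretically as $V(L_i)=P_i\star V(L)$ with $V(L)=\ell^{\star(n-1)}$ a hyperplane (by \cite[Corollary 3.7]{BCK}) and general linearity is quoted from the proof of \cite[Theorem 4.7]{BCK}. Your route is genuinely different and self-contained: you normalize $\ell$ to the parametrized form $[t-r_0:\cdots:t-r_n]$ with distinct $r_j$, identify $\mathbb{P}^n$ with $\mathbb{P}(\mathbb{C}[x]_{\leq n})$ via evaluation at the nodes $r_j$, so that the square-free $n$-fold Hadamard products become the polynomials $f_I=\prod_{k\in I}(x-t_k)$ and the hyperplanes become the evaluation functionals $M_k$ at the parameters $t_k$; general linearity is then the nonvanishing of Vandermonde determinants, and the identification of $\bigcap_{k\in I}V(M_k)$ with the single point $[f_I]$ is just the fact that a polynomial of degree at most $n$ vanishing at the $n$ distinct points $t_k$, $k\in I$, is a scalar multiple of $f_I$. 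What the interpolation picture buys is an elementary argument with an explicit description of both the points and the defining hyperplanes of the configuration (in the spirit of what Section 4 of the paper says is missing for the \cite{BCK} construction when $n\geq 3$), at the price of the coordinate normalization in Step 1; the \cite{BCK} route stays intrinsic to the Hadamard formalism and fits their broader study of the powers $\ell^{\star k}$. Two small touch-ups to Step 1 are needed, both one-line fixes: you should record that no coordinate $x_j$ vanishes identically on $\ell$ (otherwise the point where another coordinate vanishes would lie in $\ell\cap\Delta_{n-2}$), so that each restriction really has a unique zero $r_j$; and the M\"obius change of parameter should be chosen so that the parameters $t_k$ of the points of $\mathbb{X}$, and not only the $r_j$, are finite, since otherwise one point of $\mathbb{X}$ could be the image of $t=\infty$ and escape the formula $P_k=[t_k-r_0:\cdots:t_k-r_n]$.
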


When $n=2$, we now show that the star configurations produced
by the above result are always Hadamard star configurations.
Moreover, even if $n \neq 2$, if we assume one extra condition on the
line $\ell$, we can create more Hadamard star configurations. We have the following:

\begin{theorem}
Let $\ell$ be a line in $\mathbb{P}^n$ such that $\ell \cap \Delta_{n-2}
= \emptyset$, and let $\mathbb{X} \subset \ell$ be a set of $m > n$
points with $\mathbb{X} \cap \Delta_{n-1} = \emptyset$.
\begin{enumerate}
\item[$(i)$] If $[1:1:\cdots:1] \in \ell$, then
$\mathbb{X}^{\underline{\star} n}$ is a Hadamard star
configuration.
\item[$(ii)$] If $n=2$, then  $\mathbb{X}^{\underline{\star} 2}$
is a Hadamard star configuration.
\end{enumerate}
\end{theorem}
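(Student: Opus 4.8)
The plan is to produce, in each case, an explicit strong Hadamard set $\mathcal{L}=\{L_1,\dots,L_m\}$ for which $\mathbb{X}_n(\mathcal{L})=\mathbb{X}^{\underline{\star}n}$; since $\mathbb{X}^{\underline{\star}n}$ is already a star configuration of exactly $\binom{m}{n}$ points by Theorem~\ref{starconfighadamard}, it will then be enough to exhibit the inclusion $\mathbb{X}^{\underline{\star}n}\subseteq\mathbb{X}_n(\mathcal{L})$ and to count. Write $\mathbb{X}=\{Q_1,\dots,Q_m\}$ with $Q_i=[q_0(i):\cdots:q_n(i)]$; by $\mathbb{X}\cap\Delta_{n-1}=\emptyset$ every $q_j(i)\neq0$. (We may assume $n\geq2$.)

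The construction is driven by the linear space $H_0:=\ell^{\star(n-1)}$. The first thing I would establish is that, because $\ell\cap\Delta_{n-2}=\emptyset$, $H_0$ is a hyperplane, say $H_0=V(L_0)$ with $L_0=\gamma_0x_0+\cdots+\gamma_nx_n$ of full support; this is a fact about Hadamard powers of a linear space, extractable from the analysis in \cite{BCK}, or proved directly by parametrising $\ell$ and expanding the coordinatewise products, which identifies $\ell^{\star(n-1)}$ with the linear span of the $n$ moment vectors of the $n+1$ points of $\mathbb{P}^1$ where the coordinates vanish on $\ell$ --- these points are pairwise distinct precisely because $\ell\cap\Delta_{n-2}=\emptyset$, and as points of a rational normal curve they are in general linear position. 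When $n=2$ this is immediate, as $H_0=\ell^{\star1}=\ell$ is already a line in $\mathbb{P}^2$. Now for each $i$ put
\[
L_i:=\frac{\gamma_0}{q_0(i)}\,x_0+\cdots+\frac{\gamma_n}{q_n(i)}\,x_n ,
\]
so that $V(L_i)=Q_i\star V(L_0)$ by Lemma~\ref{hadamardpoint} (legitimate since $Q_i\notin\Delta_{n-1}$), and set $\mathcal{L}=\{L_1,\dots,L_m\}$. By Lemma~\ref{charlemma}$(ii)$, $\mathcal{L}$ is a strong Hadamard set --- with datum $L_0$ and points $P_i=Q_i$ --- if and only if every $Q_i$ lies on $V(L_0)=\ell^{\star(n-1)}$. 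In case $(ii)$ this is clear, as $Q_i\in\ell=\ell^{\star1}$. In case $(i)$ it follows from $[1:\cdots:1]\in\ell$: any $R\in\ell^{\star k}$ equals $[1:\cdots:1]\star R\in\ell\star\ell^{\star k}=\ell^{\star(k+1)}$, so $\ell=\ell^{\star1}\subseteq\ell^{\star2}\subseteq\cdots\subseteq\ell^{\star(n-1)}$, and in particular each $Q_i\in\ell^{\star(n-1)}$.

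It remains to see that $\mathbb{X}_n(\mathcal{L})=\mathbb{X}^{\underline{\star}n}$. For ``$\supseteq$'': given distinct $i_1,\dots,i_n$ and any $s$, write $Q_{i_1}\star\cdots\star Q_{i_n}=Q_{i_s}\star\bigl(Q_{i_1}\star\cdots\star\widehat{Q_{i_s}}\star\cdots\star Q_{i_n}\bigr)$; the second factor is a product of $n-1$ distinct points of $\mathbb{X}$, hence lies in $\mathbb{X}^{\underline{\star}(n-1)}\subseteq\ell^{\star(n-1)}=V(L_0)$, and therefore $Q_{i_1}\star\cdots\star Q_{i_n}\in Q_{i_s}\star V(L_0)=V(L_{i_s})$ for every $s$, i.e.\ it lies in $V(L_{i_1},\dots,L_{i_n})\subseteq\mathbb{X}_n(\mathcal{L})$. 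For the reverse inclusion I would check that $\mathcal{L}$ is generally linear: for any $n+1$ distinct indices $i_0,\dots,i_n$, the coefficient matrix of $L_{i_0},\dots,L_{i_n}$ has determinant $\bigl(\prod_j\gamma_j\bigr)\det\bigl[\,1/q_j(i_l)\,\bigr]_{l,j}$, and $\bigl[\,1/q_j(i_l)\,\bigr]$ is a projective Cauchy matrix attached to the points $Q_{i_0},\dots,Q_{i_n}\in\ell$ and the coordinate hyperplanes; it is nonsingular because the $Q_{i_l}$ are pairwise distinct and $\ell\cap\Delta_{n-2}=\emptyset$, while $\mathbb{X}\cap\Delta_{n-1}=\emptyset$ ensures no entry is undefined. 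Hence each $V(L_{i_1},\dots,L_{i_n})$ is a single point, and the $\binom{m}{n}$ such points are pairwise distinct (two coinciding ones would put $n+1$ of the $L_i$ through a common point, contradicting general linearity), so $|\mathbb{X}_n(\mathcal{L})|=\binom{m}{n}=|\mathbb{X}^{\underline{\star}n}|$ and the inclusion above is an equality. Thus $\mathbb{X}^{\underline{\star}n}=\mathbb{X}_n(\mathcal{L})$ with $\mathcal{L}$ a strong Hadamard set, i.e.\ it is a Hadamard star configuration.

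The main obstacle I anticipate is the structural input: that $\ell^{\star(n-1)}$ is a hyperplane with full-support equation under the hypothesis $\ell\cap\Delta_{n-2}=\emptyset$. This is exactly where that hypothesis is used (beyond $n=2$), and it is cleanest to cite it from \cite{BCK} rather than redo the linear algebra of Hadamard powers; the Cauchy-determinant check that $\mathcal{L}$ is generally linear is routine but is the computational core of the argument.
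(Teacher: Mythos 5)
Your proposal is correct and follows essentially the same route as the paper: realize $\ell^{\star(n-1)}$ as a hyperplane $V(L)$, use $[1:\cdots:1]\in\ell$ (resp.\ $n=2$) to get $\mathbb{X}\subset V(L)$, define $L_i$ by $V(L_i)=P_i\star V(L)$, and conclude via Lemma \ref{charlemma}. The only difference is that where the paper simply cites the proof of \cite[Theorem 4.7]{BCK} for the general linearity of the $L_i$ and the identification $\mathbb{X}^{\underline{\star}n}=\mathbb{X}_n(\mathcal{L})$, you re-derive these by the inclusion-plus-counting argument and a Cauchy-type determinant, which is sound and self-contained.
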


\begin{proof}   Let $\mathbb{X} = \{P_1,\ldots,P_m\} \subset \ell$ with
$m > n$.

$(i)$.  If $[1:\cdots:1] \in \ell$, then
$\ell \subset \ell^{\star 2} \subset \ell^{\star 3} \subset \cdots \subset
\ell^{\star (n-1)}$  (see, for example, \cite{FOW} after Definition 4.6).
By \cite[Corollary 3.7]{BCK}, the variety $\ell^{\star (n-1)}$ is
a hyperplane defined by one linear form, say
\[\ell^{\star (n-1)} = V(L) ~~\mbox{where $L = a_0x_0 + \cdots + a_nx_n$.}\]
Note that since $\mathbb{X} \subset \ell \subset \ell^{\star (n-1)}$,
every point $P \in \mathbb{X}$ satisfies $L(P) = 0$.

For $i=1,\ldots,m$, let $L_i$ be the linear form such that
$V(L_i) = P_i \star V(L)$.  As shown in the proof
of \cite[Theorem 4.7]{BCK}, these linear forms are  generally
linear.    Moreover, $\mathbb{X}^{\underline{\star} n}$ is the star
configuration given by $\mathcal{L} = \{L_1,\ldots,L_m\}$.
We have now shown that $\mathcal{L}$ is a strong Hadamard set by
Lemma \ref{charlemma}.

$(ii)$.  When $n=2$, then $\ell= \ell^{\star (n-1)}$ is given by a linear form
$L=a_0x_0 + \cdots + a_nx_n$.  We can then repeat the above argument.
\end{proof}

\begin{remark}
If $[1:\cdots:1] \not\in \ell \subset \mathbb{P}^n$, then
$\ell^{\star (n-1)}$ is still given by a linear form $L$, and
$\mathbb{X}^{\underline{\star} n}$ is still the star configuration given
by $\mathcal{L} = \{L_1,\ldots,L_m\}$ where $V(L_i)
= P_i \star V(L)$.  So, in particular, $\mathcal{L}$ is a Hadamard
set.  However, it may not be a strong Hadamard set because
there is no guarantee that $L$ vanishes on every $P_i \in \mathbb{X}$.
For instance, let $\ell \subset \Bbb P^3$ be the line through $[0:1:2:1]$, $[0:1:2:1]$ and $[2:-1:0:1]$.
We have that  $\ell^{\star 2}$ is the plane through
$[0:0:2:1] $,  $[0:-1:0:1] $ and $[2:0:0:1] $, and this plane does not contain the line $\ell$.
\end{remark}

%%%%%%%%%%%%%%%%%%%%%%%%%%%%%%%%%%%%%%%%%%%%%%%%%%%%%%%%%%%%%%%%%%%%%%%%%

\section{Characterization of Hadamard star configurations}
\label{sec:charcterization}

As shown in Example \ref{nonexample}, not every star configuration
can be a Hadamard star configuration.  It is then natural to ask
if we can classify what star configurations are Hadamard star configurations.
The main result of this section is a solution to this question.

The classification of Hadamard star configurations reduces to classifying
strong Hadamard sets.

\begin{theorem}\label{thmhadamardset}
Let $\mathcal{L}=\{L_1,\ldots,L_r\}$ be a generally linear set where
$L_i=a_0(i)x_0+\cdots+a_n(i)x_n$ for $i=1,\ldots,r$.
Then $\mathcal{L}$ is a strong Hadamard set if and only if
\begin{enumerate}
\item\label{thmhadamardset1} $a_j(i)\neq 0$ for all $i,j$; and
\item\label{thmhadamardset2}  there exists an $\mathbf{a}\in\mathbb{C}^{n+1}$ without zero coordinates such that $M\mathbf{a}= {\bf 0}$, where
\[M=\left(\begin{array}{ccc}{1\over a_0(1)} & \ldots & {1\over a_n(1)} \\ \vdots & \vdots & \vdots\\ {1\over a_0(r)} & \ldots & {1\over a_n(r)}\end{array}\right). \]

\end{enumerate}

 Moreover, if $n=2$,  condition  $(2)$ is equivalent to
the condition that  $\mathrm{rk} (M)\leq 2$.
\end{theorem}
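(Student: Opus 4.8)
The plan is to unwind the definition of a strong Hadamard set directly into a single statement about $\ker M$, from which conditions (\ref{thmhadamardset1}) and (\ref{thmhadamardset2}) both drop out, and then to treat the $n=2$ refinement as a short piece of linear algebra in which the hypothesis that $\mathcal{L}$ is generally linear is used twice.

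For the forward implication --- that a strong Hadamard set satisfies (\ref{thmhadamardset1}) and (\ref{thmhadamardset2}) --- I would first get (\ref{thmhadamardset1}) from Remark~\ref{support}: being a Hadamard set, all the $L_i$ share a common support $T\subseteq\{x_0,\dots,x_n\}$, so they all lie in the coordinate subspace $W$ of $S_1$ spanned by the variables in $T$, with $\dim W=|T|$; if $|T|\le n$, then since $r\ge n+1$ any $n+1$ of the $L_i$ lie in this space of dimension $\le n$ and so are linearly dependent, contradicting that $\mathcal{L}$ is generally linear, hence $T=\{x_0,\dots,x_n\}$ and $a_j(i)\ne 0$ for all $i,j$. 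Then, letting $L=a_0x_0+\cdots+a_nx_n$ and $P_i=[p_0(i):\cdots:p_n(i)]$ (all $p_j(i)\ne 0$) be the data witnessing the strong Hadamard set, the relation $[L_i]=[\sum_j(a_j/p_j(i))x_j]$ produces a unit $\mu_i$ with $a_j(i)=\mu_i a_j/p_j(i)$; since $a_j(i)\ne 0$ this forces $a_j\ne 0$ and $p_j(i)=\mu_i a_j/a_j(i)$, and substituting into $\sum_j a_jp_j(i)=0$ and cancelling $\mu_i$ gives $\sum_j a_j^2/a_j(i)=0$ for every $i$. Thus $(a_0^2,\dots,a_n^2)$ has no zero coordinate and lies in $\ker M$, which is (\ref{thmhadamardset2}).

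For the converse I would start from (\ref{thmhadamardset1}) together with a vector $\mathbf{w}=(w_0,\dots,w_n)$ having no zero coordinate and $M\mathbf{w}=\mathbf{0}$ (condition (\ref{thmhadamardset2})). Using that $\mathbb{C}$ is algebraically closed, choose $a_j$ with $a_j^2=w_j$, so $a_j\ne 0$; set $L=a_0x_0+\cdots+a_nx_n$ and, for each $i$, $P_i=[a_0/a_0(i):\cdots:a_n/a_n(i)]$, which is well defined with all coordinates nonzero by (\ref{thmhadamardset1}), so $P_i\in\mathbb{P}^n\setminus\Delta_{n-1}$. With $p_j(i)=a_j/a_j(i)$ one checks $a_j/p_j(i)=a_j(i)$, so $[L_i]=[\sum_j(a_j/p_j(i))x_j]$, while $\sum_j a_jp_j(i)=\sum_j a_j^2/a_j(i)=\sum_j w_j/a_j(i)=0$; hence $\mathcal{L}$ is a strong Hadamard set for $L$ and $P_1,\dots,P_r$. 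The only point to notice here is that the existence of a vector in $\ker M$ with no zero coordinate is equivalent to the existence of one whose coordinates are all nonzero squares, which is immediate over $\mathbb{C}$ and is the only place algebraic closedness is used.

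Finally, for $n=2$ (with (\ref{thmhadamardset1}) in force, so that $M$ is defined) the matrix $M$ has three columns. If $\mathrm{rk}(M)=3$ then $\ker M=\{\mathbf{0}\}$ and (\ref{thmhadamardset2}) fails, giving (\ref{thmhadamardset2})$\,\Rightarrow\,\mathrm{rk}(M)\le 2$. For the reverse I would first observe that being generally linear forces $\mathrm{rk}(M)=2$ exactly: no two of the $L_i$ are proportional (else, together with any third of them --- there is one since $r\ge n+1=3$ --- they would form a dependent triple), so no two rows of $M$ are proportional and $\mathrm{rk}(M)\ge 2$; hence $\ker M$ is a line, spanned by some $\mathbf{v}=(v_0,v_1,v_2)\ne\mathbf{0}$. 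The crux --- and the step I expect to be the main obstacle --- is to show $\mathbf{v}$ has no zero coordinate. If say $v_0=0$ and a second coordinate of $\mathbf{v}$ also vanished, then $M\mathbf{v}=\mathbf{0}$, read off in the surviving coordinate, forces $\mathbf{v}=\mathbf{0}$; so $v_1,v_2\ne 0$, and then $M\mathbf{v}=\mathbf{0}$ gives $a_2(i)=-(v_2/v_1)\,a_1(i)$ for every $i$, placing every $L_i$ in the two-dimensional subspace $\langle x_0,\;x_1-(v_2/v_1)x_2\rangle$ of $S_1$, again contradicting that $\mathcal{L}$ is generally linear; the cases $v_1=0$ and $v_2=0$ are symmetric. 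This is also the reason the rank reformulation is special to $n=2$: for $n\ge 3$, a nonzero kernel need not contain any vector all of whose coordinates are nonzero.
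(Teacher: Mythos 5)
Your proposal is correct and follows essentially the same route as the paper's proof: the same use of general linearity to force all coefficients (and all $a_j$) nonzero, the same square-root construction of $L$ and $P_i=[a_0/a_0(i):\cdots:a_n/a_n(i)]$ for the converse, and the same contradiction with general linearity in the $n=2$ case, which you phrase as all the $L_i$ lying in a two-dimensional subspace of $S_1$ while the paper phrases it as all the lines $V(L_i)$ passing through the common point $[b_1:b_0:0]$ --- equivalent statements. Your explicit tracking of the projective scalar $\mu_i$ and of the rank being exactly $2$ are only minor refinements of the paper's argument, not a different approach.
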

\begin{proof}
Assume that $\mathcal{L}$ is a strong Hadamard set. Thus  there exists a linear
form $L = a_0x_0+\cdots +a_nx_n$ and $r$ points $P_i = [p_0(i):\cdots:
p_n(i)] \in \mathbb{P}^n \setminus \Delta_{n-1}$ such that
$P_i \star V(L) = V(L_i)$ for $i=1,\ldots,r$.  By Lemma \ref{hadamardpoint},
this implies that
\[a_j(i) = \frac{a_j}{p_j(i)} ~~\mbox{for all $1 \leq i \leq r$ and
$0 \leq j \leq n$.}\]

Since $\mathcal  L$ is generally linear, then $a_j \neq 0$ for all $j=0,\ldots,n$.  To see this, assume for a contradiction,  that $a_n = 0$.  Then
the above expression for $a_n(i)$ implies $a_n(i) = 0$ for all $i=1,\ldots,r$.
Thus $L_1,\ldots,L_r$ are linear forms not involving
 $x_n$, and hence any $n+1$ elements
of $\{L_1,\ldots,L_r\}$ are linear dependent, contradicting our
assumption that the elements of $\mathcal{L}$ are generally linear.

Because $a_j \neq 0$ for all $j=0,\ldots,n$, we thus have that
$a_j(i) \neq 0$ for all $1 \leq i \leq r$ and
$0 \leq j \leq n$, thus proving $\eqref{thmhadamardset1}$.

For each $i=1,\ldots,r$, we also have $P_i \in V(L)$ since $\mathcal{L}$
is a strong Hadamard set.  Since $p_j(i) = \frac{a_j}{a_j(i)}$ for
all $1 \leq i \leq r$ and
$0 \leq j \leq n$, we have
\[a_0p_0(i)+\cdots +a_np_n(i) = \frac{a_0^2}{a_0(i)}+\cdots + \frac{a_n^2}{a_n(i)}
 = 0\]
for all $i=1,\ldots,r$.  But this means that
${\bf a} = \begin{pmatrix} a_0^2 & a_1^2 & \cdots & a_n^2 \end{pmatrix}^T \in
\mathbb{C}^{n+1}$ satisfies $M{\bf a} = {\bf 0}$.  Since each $a_j^2 \neq 0$ for $j=0,\ldots,n$,
the vector ${\bf a}$ shows that $\eqref{thmhadamardset2}$ is
satisfied.

Now, by assuming that $\eqref{thmhadamardset1}$ and $\eqref{thmhadamardset2}$ hold, we need to find a linear form $L = a_0x_0+\cdots + a_nx_n$
and points $P_i = [p_0(i):\cdots:p_n(i)] \in \mathbb{P}^n \setminus
\Delta_{n-1}$ for $i=1,\ldots,r$,
such that $V(L_i) = P_i \star V(L)$ for $i=1,\ldots,r$ and
$P_i \in V(L)$ for $i=1,\ldots,r$.

{
By $\eqref{thmhadamardset2}$, there exists a vector
 ${\bf a} \in \mathbb{C}^{n+1}$ with no zero coordinates
such that $M{\bf a}  = {\bf 0}$.
Say  ${\bf a}^T = \begin{pmatrix} b_0 & \cdots & b_n \end{pmatrix}$,
and let $a^2_i = b_i$;  note that because $\mathbb{C}$ is algebraically
closed, each
$a_i \in \mathbb{C}$.}

 We now claim
that $L = a_0x_0+\cdots + a_nx_n$ and
\[P_i = \left[\frac{a_0}{a_0(i)}:\frac{a_1}{a_1(i)}:\cdots:\frac{a_n}{a_n(i)}\right]
~~\mbox{for $i=1,\ldots,r$}\]
are the desired linear form and points.  We first note that $\eqref{thmhadamardset1}$ implies
that the points $P_i$ are defined.  As well, each $a_j \neq 0$,
so each $P_i \in \mathbb{P}^n \setminus \Delta_{n-1}$.
By Lemma \ref{hadamardpoint}, $P_i \star V(L) = V(L_i)$
for each $i$ since
\[L_i = a_0(i)x_0+\cdots+a_n(i)x_n = \frac{a_0}{\frac{a_0}{a_0(i)}}x_0
+ \cdots + \frac{a_n}{\frac{a_n}{a_n(i)}}x_n.\]
Finally, each $P_i \in V(L)$ since
\[a_0\left(\frac{a_0}{a_0(i)}\right) + \cdots + a_n\left(\frac{a_n}{a_n(i)}\right)
= \frac{a^2_0}{a_0(i)}+ \cdots + \frac{a^2_n}{a_n(i)} = 0,\]
where the final equality follows from the fact that
${\bf a}^T =
\begin{pmatrix} a_0^2 & \cdots & a_n^2 \end{pmatrix}$
is a solution to $M{\bf a} = {\bf 0}$.

To prove the final statement assume   $n=2$.
Condition $(1)$ implies that  the matrix $M$ given
below exists:
\[M=\left(\begin{array}{ccc}{1\over a_0(1)} & {1\over a_1(1)} & {1\over a_2(1)} \\ \vdots & \vdots & \vdots\\ {1\over a_0(r)} & {1 \over a_1(r)} &
{1\over a_2(r)}\end{array}\right).\]
Obviously the condition $(2)$ implies that $rk(M) \leq 2$.
Now we prove by contradiction that $rk(M) \leq 2$ implies condition $(2)$.
Suppose that the equation $M{\bf x} = {\bf 0}$ does not have
a solution without zero coordinates.  By $(2)$, there is at least one
solution to $M{\bf x} = {\bf 0}$,
say ${\bf b}  = \begin{bmatrix} b_0 \\ b_1 \\ b_2 \end{bmatrix}$,
with some $b_i =0$.  Without loss of generality, assume that $b_2 = 0$.
So, for all $i=1,\ldots,r$, we have
\[\frac{b_0}{a_0(i)} + \frac{b_1}{a_1(i)} = \frac{a_1(i)b_0+a_0(i)b_1}{a_0(i)a_1(i)} = 0.\]
We thus have $a_1(i)b_0+a_0(i)b_1 = 0$ for all $i=1,\ldots,r$.
But then every $L_i$ vanishes at
$[b_1:b_0:0]$, contradicting the fact that $\mathbb{X}$ is a star
configuration.   So $M{\bf x} = {\bf 0}$ must have a solution
with no zero-coordinates.
\end{proof}

\begin{remark} Note that to check that condition
\eqref{thmhadamardset2}  holds it is enough to check one of the following statements:
\begin{itemize}
\item the ideal of $\{{\bf x} ~|~ M{\bf x} = {\bf 0}\}
\subset \mathbb{C}^{n+1}$ does
 not contain  the product $\Pi_0^{n}x_i$;
\item there is no linear combination of the rows of $M$ having
exactly one non-zero entry.
\end{itemize}

\end{remark}

\begin{remark}\label{remhadamardset} From  Theorem \ref{thmhadamardset}
it follows that if
$\mathbb{X} = \mathbb{X}_c(L_1,\ldots,L_r)\subset\PP^n$ is a star configuration
with $L_i=a_0(i)x_0+\cdots+a_n(i)x_n$ for all $i$, then $\mathbb{X}$ is a
Hadamard star configuration if and only if
\begin{enumerate}
\item\label{corhadamardset1} $a_j(i)\neq 0$ for all $i,j$; and
\item\label{corhadamardset2} the matrix
\[M=\left(\begin{array}{ccc}{1\over a_0(1)} & \ldots & {1\over a_n(1)} \\ \vdots & \vdots & \vdots\\ {1\over a_0(r)} & \ldots & {1\over a_n(r)}\end{array}\right)\]
is such that there exists an
$\mathbf{x}\in\mathbb{C}^{n+1}$ without zero coordinates
such that $M\mathbf{x}=0$.
\end{enumerate}
 Moreover, if $n=2$,  the condition $(2)$ is equivalent to $\mathrm{rk}(M)\leq 2$.
\end{remark}

%%%%%%%%%%%%%%%%%%%%%%%%%%%%%%%%%%%%%%%%%%%%%%%%%%%%%%%%%%%%%%%%%%%%%%%%%

\section{Toward  explicit constructions of  Hadamard star configurations
in $\PP^n$}\label{sec:building}

In \cite{BCK} the authors show how to construct zero dimensional star
configurations by using a sufficiently general line $\ell\subset\PP^n$.
{In particular,
after one fixes points $P_1,\ldots,P_r\in \ell$ with no zero coordinates,
 one then computes all possible $n$-wise Hadamard products of $n$ of the
points without repetition}. In this way one can explicitly produce the
points of the star configuration { without even knowing
the set $\mathcal{L}$}.   But, if $n\geq 3$, there is no
explicit description of the {\em hyperplanes} defining the star configuration. In what follows, we want to
describe a star configuration in $\mathbb{P}^n$ using a hyperplane $H$ and
points lying on it.

Given a hyperplane $H\subset\PP^n$ and points $P_i\in H$, it is not enough to assume that no zero coordinates appear in order to obtain a star configuration intersecting the hyperplanes $P_i\star H$. For example, it is easy to construct examples in which the hyperplanes $P_1\star H,P_2\star H$ and $P_3\star H$ intersect in codimension smaller than three.
\begin{example}
Consider the plane $H\subset\PP^3$ defined by $4x_0-5x_1+2x_2-x_3=0$ and the points
\[P_1=[1:1:1:1],P_2=[2:1:-2:-1],P_3=[5:4:10:20]\in H.\]
By direct computations we obtain the planes
\[
\begin{aligned}
& H_1=P_1\star H: 4x_0-5x_1+2x_2-x_3=0, \\
& H_2=P_2\star H: 2x_0-5x_1-x_2+x_3=0, \\
& H_3=P_3\star H: 16x_0-25x_1+4x_2-x_3=0,
\end{aligned}
\]
and we see that $H_3\supset H_1\cap H_2$.
\end{example}

\begin{definition}
The {\it coordinate points} $E_0,\ldots ,E_n\in\PP^n$ are the points $E_i$ having exactly one non-zero coordinate in position $i+1$.
\end{definition}

{ The next theorem is the first step towards generalizing the
work of \cite{BCK}.   In particular, the next result shows that
if one mimics the construction of \cite{BCK} for points on a
hyperplane (instead of points on a line), and if one produces
a star configuration,
then there must be a special relationship between the points on
the hyperplane and the coordinate points.

\begin{theorem}\label{thmRNC}
Let $H\subset\PP^n$ be a hyperplane and $P_1,\ldots,P_r\in H$ be points
such that $P_i\not\in\Delta_{n-1}$ and $H$ does not contain any of the
coordinate points $E_i$. Set $H_i=P_i\star H$.

If
\[\mathbb{X}_c(H_1,\ldots, H_r)\]
is a codimension $c\geq 2$ star configuration, then there is no rational normal curve containing the coordinates points $E_0,\ldots ,E_n$ and the points $P_i$, $P_j$, and $P_k$ for all possible choices of $1\leq i<j<k\leq r$.
\end{theorem}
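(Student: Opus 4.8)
The plan is to translate the hypothesis ``$\mathbb{X}_c(H_1,\dots,H_r)$ is a star configuration'' into a statement about general position of certain vectors of reciprocal coordinates, and then to show that a bad triple forces three of these vectors to lie on a line. First, write $H=V(L)$ with $L=a_0x_0+\cdots+a_nx_n$. Since $L(E_j)=a_j$ and $H$ contains no coordinate point, every $a_j\neq 0$; and since each $P_i=[p_0(i):\cdots:p_n(i)]\notin\Delta_{n-1}$ has no zero coordinate, Lemma~\ref{hadamardpoint} gives $H_i=V(L_i)$ with $L_i=\sum_j\frac{a_j}{p_j(i)}x_j$. By definition, $\mathbb{X}_c(H_1,\dots,H_r)$ is a codimension~$c$ star configuration exactly when $\mathcal{L}=\{L_1,\dots,L_r\}$ is generally linear, i.e.\ $|\mathcal{L}|\geq n+1$ and every $n+1$ of the $L_i$ are linearly independent. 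Dividing the $j$-th coordinate by the nonzero constant $a_j$ (the same invertible operation for every coefficient vector), this is equivalent to asking that for every choice of $n+1$ indices the vectors
\[ v_i:=\Bigl(\tfrac{1}{p_0(i)},\tfrac{1}{p_1(i)},\dots,\tfrac{1}{p_n(i)}\Bigr)\in\mathbb{C}^{n+1} \]
be linearly independent. Note that $2\leq c\leq n$ forces $n+1\geq 3$.

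Now I would argue by contradiction: assume $\mathbb{X}_c(H_1,\dots,H_r)$ is a codimension~$c$ star configuration, so $\mathcal{L}$ is generally linear (in particular $|\mathcal{L}|\geq n+1$), and suppose that for some $1\leq i<j<k\leq r$ there is a rational normal curve $C$ through $E_0,\dots,E_n$ and $P_i,P_j,P_k$. The classical fact to exploit is that an RNC through \emph{all} $n+1$ coordinate points can be parametrized as $\phi(u)=\bigl[\frac{c_0}{u-\lambda_0}:\cdots:\frac{c_n}{u-\lambda_n}\bigr]$ with the $\lambda_\ell$ distinct and the $c_\ell$ nonzero: writing $C$ as the image of $[f_0:\cdots:f_n]$ for a basis $f_0,\dots,f_n$ of the degree-$n$ forms on $\PP^1$, the condition $E_\ell\in C$ forces each $f_m$ with $m\neq\ell$ to vanish at the parameter $\lambda_\ell$ of $E_\ell$; since these parameters are distinct and $\deg f_m=n$, we get $f_m=c_m\prod_{\ell\neq m}(u-\lambda_\ell)$ in a suitable affine chart, and dividing through by $\prod_\ell(u-\lambda_\ell)$ gives the stated form. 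In other words, coordinatewise inversion carries $C$ to a line. Concretely, if $P_i=\phi(u_i)$ then $p_\ell(i)=\frac{c_\ell}{u_i-\lambda_\ell}$, so
\[ v_i=u_i\,\mathbf{w}-\mathbf{z},\qquad\mathbf{w}:=\Bigl(\tfrac{1}{c_0},\dots,\tfrac{1}{c_n}\Bigr),\quad\mathbf{z}:=\Bigl(\tfrac{\lambda_0}{c_0},\dots,\tfrac{\lambda_n}{c_n}\Bigr), \]
and likewise for $v_j,v_k$. Thus $v_i,v_j,v_k$ all lie in $\mathrm{span}\{\mathbf{w},\mathbf{z}\}$, a subspace of dimension at most $2$, hence are linearly dependent. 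Since $\mathcal{L}$ has at least $n+1\geq 3$ elements, enlarge $\{L_i,L_j,L_k\}$ to an $(n+1)$-element subset of $\mathcal{L}$; it contains a linearly dependent triple and so is itself linearly dependent, contradicting that $\mathcal{L}$ is generally linear. Therefore no such triple exists.

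The step I expect to be the real obstacle is the geometric observation used above: that coordinatewise inversion sends a rational normal curve through all of the coordinate points to a line, equivalently, that along such a curve each reciprocal coordinate $1/p_\ell(i)$ is an affine-linear function of the curve parameter. This is exactly what makes \emph{three} of the points $P_i$, rather than $n+1$ of them, already enough to break general linearity. The rest is bookkeeping: one chooses the affine coordinate on $\PP^1$ so that neither any $\lambda_\ell$ nor any parameter of $P_i,P_j,P_k$ equals $\infty$ (possible since only finitely many points are involved), and checks that $\mathbf{w}$ and $\mathbf{z}$ are linearly independent and that $v_i,v_j,v_k$ are pairwise distinct, so that $L_i,L_j,L_k$ are genuinely three distinct members of $\mathcal{L}$.
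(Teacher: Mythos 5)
Your proof is correct and follows essentially the same route as the paper: the paper dualizes the hyperplanes $H_i$ to points $H_i^{\vee}=\left[\frac{a_0}{p_0(i)}:\cdots:\frac{a_n}{p_n(i)}\right]$, observes that a star configuration forces every triple of these to be non-collinear, and invokes the standard Cremona transformation together with the classical fact that collinearity of the inverted points corresponds to $E_0,\ldots,E_n,P_i,P_j,P_k$ lying on a rational normal curve. The only difference is that you prove the needed direction of that classical fact explicitly (via the parametrization $\left[\frac{c_0}{u-\lambda_0}:\cdots:\frac{c_n}{u-\lambda_n}\right]$ of a rational normal curve through all coordinate points, which shows coordinatewise inversion linearizes it) instead of citing it, which is a sound and self-contained substitute.
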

\begin{proof} Assume
$P_i=[p_0(i):\cdots:p_n(i)] \in \mathbb{P}^n \setminus \Delta_{n-1}$, so
$$H_i : \frac{a_0}{p_0(i)}x_0 + \cdots + \frac{a_n}{p_n(i)}x_n =0,$$
where $H = a_0x_0+\cdots+a_nx_n$.

The hyperplane $H_i $ corresponds to the point
$ { H_i }^{\vee}= \left[ \frac{a_0}{p_0(i)}:  \cdots : \frac{a_n}{p_n(i)} \right ] \in ( \mathbb{P}^n)^{\vee}.$ If $\mathbb{X}_c(H_1,\ldots, H_r)$
is a star configuration,  then $H_i$, $H_j$, $H_k$ ($ i<j<k$) meet in
codimension $3$, that is, the points
$ { H_i }^{\vee}$, $ { H_j }^{\vee}$, $ { H_k }^{\vee}$ are not collinear.

Let $\sigma : (\mathbb{P}^n)^{\vee}   \dashedrightarrow (\mathbb{P}^n)^{\vee}$ be
the  standard Cremona trasformation
$ \sigma ([y_0: \cdots :y_n] )
=\left [\frac {1} y_0: \cdots :\frac {1}y_n \right].
$
By the well known properties of Cremona's maps, we have that
$ { H_i }^{\vee}$, $ { H_j }^{\vee}$, $ { H_k }^{\vee}$ are  collinear
if and only if the points $P_i$, $P_j$, $P_k$, $E_0,\ldots ,E_n$ lie on
a rational normal curve, and the result is proved.
\end{proof}

\begin{remark}
Note that Theorem \ref{thmRNC} only gives a necessary condition. In fact,
the non-existence of the rational normal curves assures us that the three
planes $P_i\star H,P_j\star H$, and $P_k\star H$ intersect in codimension
three, that is, their equations are linearly independent. However, we do
not know what happens for four or more planes $H_i$, that is, we do not
know whether their equations are linearly independent. Hence we do not
know whether the collection of {\em all} the linear forms is generally linear.

\end{remark}

\begin{remark}
Note that in the case $n=2$, the necessary  condition of
Theorem \ref {remhadamardset}  is always satisfied since a line
cannot intersect a rational normal curve of  $\PP^2$, that is, an
irreducible conic, in three points. Note that the conic through
$P_i$, $P_j,$ $E_0,$  $E_1,$ $E_2$  is irreducible since the points
$P_i$ and $P_j$ do not have zero coordinates.
\end{remark}

%%%%%%%%%%%%%%%%%%%%%%%%%%%%%%%%%%%%%%%%%%%%%%%%%%%%%%%%%%%%%%%%%%%%


\begin{thebibliography}{10}
\bibitem{BCFL}
C. Bocci. G. Calussi, G. Fatabbi, A. Lorenzini,
On Hadamard products of linear varieties.
J. Algebra Appl. {\bf 16} (2017), 1750155.

\bibitem{BCFL2}
C. Bocci. G. Calussi, G. Fatabbi, A. Lorenzini,
The Hilbert function of some Hadamard products. To appear
Collectanea Math. (2017).

\bibitem{BCK}
C. Bocci, E. Carlini, J. Kileel,
Hadamard products of linear spaces.
J. Algebra {\bf 448} (2016), 595--617.



\bibitem{BH}
C. Bocci,  B. Harbourne,
Comparing powers and symbolic powers of ideals.
J. Algebraic Geom. {\bf 19} (2010), 399--417.

\bibitem{CGVT}
E. Carlini, E. Guardo, A. Van Tuyl,
Star configurations on generic hypersurfaces.
J. Algebra {\bf 407} (2014), 1--20.

\bibitem{CMS}
M.A. Cueto, J. Morton, B. Sturmfels,
Geometry of the restricted Boltzmann machine.
{\it Algebraic methods in statistics and probability II}, 135--153,
Contemp. Math., {\bf 516}, Amer. Math. Soc., Providence, RI, 2010.

\bibitem{CTY}
M.A. Cueto, E.A. Tobis, J. Yu,
An implicitization challenge for binary factor analysis.
J. Symbolic Comput. {\bf 45} (2010), 1296--1315.

%\bibitem{Do}
%I.V. Dolgachev,
%{\it Classical Algebraic Geometry: a modern view}.
%Cambridge Univ. Press (2012).


\bibitem{GHM}
A.V. Geramita, B. Harbourne, J. Migliore,
Star configurations in $\mathbb{P}^n$.
J. Algebra {\bf 376} (2013), 279--299.

\bibitem{GHMN}
A.V. Geramita, B. Harbourne, J. Migliore, U. Nagel,
Matroid configurations and symbolic powers of their ideals.
Trans. Amer. Math. Soc.  {\bf 369} (2017), 7049--7066.


%\bibitem{GMS:1}
%A.V. Geramita, J. Migliore, S. Sabourin,
%On the first infinitesimal neighborhood of a linear
%configuration of points in $\mathbb{P}^2$.
%J. Algebra {\bf 298} (2008), 563--611.

\bibitem{FOW}
N. Friedenberg, A. Oneto, R.L. Williams,
Minkowski sums and Hadamard products of algebraic varieties.
 In: G. Smith, B. Sturmfels (eds) {\it Combinatorial Algebraic Geometry}.
Fields Institute Communications, {\bf 80}.
Springer, New York, NY  (2017), 133--157.

\bibitem{MS}
D. Maclagan, B. Sturmfels, {\it Introduction to tropical geometry}.
Graduate Studies in Mathematics, {\bf 161}, AMS, Providence, RI (2015).

\bibitem{PS:1} J.P. Park, Y.S. Shin,
The minimal free resolution of a star-configuration in $\mathbb{P}^n$.
J. Pure Appl. Algebra {\bf 219} (2015), 2124--2133.
\end{thebibliography}
\end{document}